\title{Limiting theories of substructures}
\author{Samuel M. Corson}
\theoremstyle{definition}\newtheorem*{A}{Theorem \ref{manyautomorphisms}}
\theoremstyle{definition}\newtheorem{theorem}{Theorem}[section]
\theoremstyle{definition}\newtheorem{corollary}[theorem]{Corollary}
\theoremstyle{definition}\newtheorem{proposition}[theorem]{Proposition}
\theoremstyle{definition}
\theoremstyle{definition}\newtheorem{question}[theorem]{Question}
\theoremstyle{definition}\newtheorem{example}{Example}
\theoremstyle{definition}
\theoremstyle{definition}
\theoremstyle{definition}\newtheorem{lemma}[theorem]{Lemma}
\theoremstyle{definition}
\theoremstyle{definition}
\theoremstyle{definition}
\theoremstyle{definition}
\newtheorem*{question*}{Question}
\newtheorem*{theorem*}{Theorem}
\newtheorem*{corollary*}{Corollary}
\newtheorem*{lemma*}{Lemma}
\newtheorem*{fact*}{Fact}
\def\pmc#1{\setbox0=\hbox{#1}
    \kern-.1em\copy0\kern-\wd0
    \kern.1em\copy0\kern-\wd0}
\newcommand{\Th}{\operatorname{Th}}
\newcommand{\Sent}{\operatorname{Sent}}
\newcommand{\W}{\operatorname{Wff}}
\newcommand{\So}{\mathcal{S}}
\newcommand{\card}{\operatorname{card}}
\newcommand{\supp}{\operatorname{supp}}
\begin{document}

\address{Instituto de Ciencias Matem\'aticas CSIC-UAM-UC3M-UCM, 28049 Madrid, Spain.}
\email{sammyc973@gmail.com}
\keywords{model theory, direct limit, first order logic}
\subjclass[2010]{03C07, 03C13}

\maketitle

\begin{abstract}  We introduce the notion of limiting theories, giving examples and providing a sufficient condition under which the first order theory of a structure is the limit of the first order theories of a collection of substructures.  We also give a new proof that theories like that of infinite sets are not finitely axiomatizable.
\end{abstract}

\begin{section}{Introduction}

The notions of elementary equivalence and elementary substructure have been fundamental in model theory (\cite{M}, \cite{B1} , \cite{T}, \cite{B2},  \cite{S1}, \cite{KM}, \cite{S2}, \cite{CRK}).  When considering a directed system of substructures whose union is the structure itself, it may be the case that the theory of the large structure is different from that of the smaller structures for obvious reasons.  For example, the structure might be infinite and the substructures under consideration might each be finite.  However, the theories of the substructures might approach the theory of the structure in a way which we will now make precise.

Let $\sigma =(\mathcal{F}, \mathcal{R})$ be a signature, with $\mathcal{F}$ the collection of function symbols and $\mathcal{R}$ the collection of relation symbols.  Let $L_{\sigma}$ be the language associated with $\sigma$, $\W(\sigma)$ the set of well-formed formulas and $\Sent(\sigma) \subseteq \W(\sigma)$ the set of sentences.  Recall that a \textbf{directed set} $I$ is a partially ordered set such that for any $i_1, i_2\in I$ there exists $i_3\in I$ with $i_3 \geq i_1, i_2$.  We will assume throughout this note that every index set labeled by a symbol $I$ is a directed set.  Recall that a \textbf{theory} is a set $\Delta \subseteq \Sent(\sigma)$ such that $\Delta \vdash \theta$ implies $\theta \in \Delta$.  For any $\Gamma \subseteq \Sent(\sigma)$ we let $\Th(\Gamma)$ denote the theory which is the set of consequences of $\Gamma$.  We consider a collection $\{\Delta_i\}_{i\in I}$ of theories in $\sigma$.  Let

\begin{center}
$\limsup_I \Delta_i = \{\theta \in \Sent(\sigma): (\forall i\in I)(\exists j \geq i)[\theta \in \Delta_j]\}$

$\liminf_I \Delta_i = \{\theta \in \Sent(\sigma): (\exists i\in I)(\forall j \geq i)[\theta \in \Delta_j]\}$
\end{center}

The set $\liminf_I \Delta_i$ is itself a theory, $\limsup_{i\in I}\Delta_i$ need not be a theory, and it is clear that  $\limsup_I \Delta_i \supseteq \liminf_I \Delta_i$.  We write $\lim_I\Delta_i = \Delta$ and say the limit exists provided $\Delta = \limsup_I \Delta_i = \liminf_I \Delta_i$.

Let $\So = (S, F, R)$ be a structure of signature $\sigma$.  We consider the situation for limiting theories among substructures.  Suppose $\So$ is the direct limit of a collection of substructures, in the sense that $\{\So_i\}_{i\in I}$ satisfies

\begin{enumerate}
\item $\So_i = (S_i, F|S_i, R|S_i)$

\item $S = \bigcup_{i\in I}S_i$

\item $i \leq j  \Leftrightarrow S_i \subseteq S_j$
\end{enumerate}

\noindent Let $L_{\sigma, \So^*}$ denote the augmented language over $\sigma$ which includes all the constants in the set $S$ and let $\W(\sigma, \So^*)$ and $\Sent(\sigma, \So^*)$ denote the sets of well-formed formulas and of sentences over the language $L_{\sigma, \So^*}$.  Define $L_{\sigma, \So_i^*}$, $\W(\sigma, \So_i^*)$ and $\Sent(\sigma, \So_i^*)$ similarly.  We have the obvious relationships

\begin{center}

$L_{\sigma, \So^*} \supseteq L_{\sigma, \So_i^*}$

$\W(\sigma, \So^*) \supseteq \W(\sigma, \So_i^*)$

$\Sent(\sigma, \So^*)\supseteq \Sent(\sigma, \So_i^*)$

$\Sent(\sigma, \So^*) = \bigcup_{i\in I}\Sent(\sigma, \So_i^*)$
\end{center}

We will overload notation and let $\Th(\So)$ denote the first order theory of $\So$ and $\Th(\So^*)$ denote the augmented first order theory of $\So$.  One can ask under what circumstances $\lim_I \Th(\So_i)$ or $\lim_I\Th(\So_i^*)$ exist, or further ask whether $\lim_I \Th(\So_i)= \Th(\So)$ or more strongly $\lim_I \Th(\So_i^*)= \Th(\So^*)$.  In case $\lim_I \Th(\So_i)= \Th(\So)$ the intuition is that the $\So_i$ come closer and closer to being elementarily equivalent to $\So$.  In case $\lim_I \Th(\So_i^*)= \Th(\So^*)$ the intuition is that the $\So_i$ come closer and closer to capturing the augmented theory of $\So$.

In Section \ref{prelim} we give some straightforward preliminary results on abstract limits of theories.  In Section \ref{directlim} we approach the theory of substructures.  Some easy results and examples are exhibited.  We then prove the following, which can be considered a limit analogue to the Tarski-Vaught automorphism criterion for elementary subsumption:

\begin{theorem}\label{manyautomorphisms}    Suppose $\{\So_i\}_{i\in I}$ is a collection of substructures of $\So$ such that $\lim_I \So_i = \So$.
\begin{enumerate}

\item  Suppose that for each $m \in \mathbb{N}$ and $i' \in I$ there exists $i \geq i'$ such that for all $j \geq i$ and $(m_1, m_2) \in \mathbb{N}^2$ such that $m_1 + m_2 \leq m$ and $c_1, \ldots, c_{m_0} \in S_i$ and $d_1, \ldots, d_{m_2} \in S_j$ there exists an automorphism $f: \So_j \rightarrow \So_j$ with $f(c_l) = c_l$ and $f(\{d_1, \ldots, d_{m_2}\}) \subseteq S_i$.  Then the limit $\lim_I \Th(\So_j^*)$ exists.

\item  If, in addition to (1), for each $m \in \mathbb{N}$ and $i' \in I$ there exists $i \geq i'$ such that whenever $(m_1, m_2) \in \mathbb{N}^2$ are such that $m_1 + m_2 \leq m$ and $c_1, \ldots, c_{m_0} \in S_i$ and $d_1, \ldots, d_{m_2} \in S$ there exists an automorphism $f: \So \rightarrow \So$ with $f(c_l) = c_l$ and $f(\{d_1, \ldots, d_{m_2}\}) \subseteq S_i$.  Then $\lim_I \Th(\So_j^*) = \Th(\So^*)$.

\end{enumerate}

\end{theorem}

This theorem combined with a result in Section \ref{prelim} provides yet another proof that the property of being infinite is not finitely axiomatizable (see Corollary \ref{notfinitelyax}).  The idea is the following.  The theory of an infinite set can be shown to be the limit of the theories of increasingly large finite sets.  If the theory of an infinite set were finitely axiomatizable then the theories of increasingly large finite sets would eventually stabilize, and they obviously do not.  That the theory of an infinite rank free abelian group is not finitely axiomatizable modulo the theory of torsion-free abelian groups follows from a similar argument.
\end{section}

\begin{section}{Preliminaries}\label{prelim}

Recall that a subset $\Gamma \subseteq \Sent(\sigma)$ is \textbf{consistent} if $\Th(\Gamma) \neq \Sent(\sigma)$, \textbf{complete} if for all $\theta \in \Sent(\sigma)$ we have at least one of $\theta$ or $\neg\theta$ in $\Th(\Gamma)$ and \textbf{inconsistent} or \textbf{incomplete} provided it is not consistent or not complete, respectively.

\begin{lemma}\label{trivial3}  Let $\{\Delta_i\}_{i\in I}$ be a directed set of theories.  If each of the $\Delta_i$ is consistent and complete then $\liminf_I \Delta_i$ is consistent and $\limsup_I \Delta_i$ is complete.  Furthermore, the following become equivalent:

\begin{enumerate}

\item  $\limsup_I\Delta_i$ is consistent

\item  $\liminf_I \Delta_i$ is complete

\item  $\lim_I \Delta_i$ exists
\end{enumerate}
\end{lemma}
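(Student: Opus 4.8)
The plan is to first dispatch the two unconditional assertions directly from Observation \ref{trivial1}, and then to close the cycle of implications $(1)\Rightarrow(2)\Rightarrow(3)\Rightarrow(1)$. For the unconditional part: since every $\Delta_i$ is consistent, the criterion in Observation \ref{trivial1}(2), namely $(\forall i)(\exists j\geq i)[\Delta_j \text{ consistent}]$, holds trivially, so $\liminf_I\Delta_i$ is consistent; and since every $\Delta_j$ is complete, for each $\theta$ and each $j$ we have $\theta\in\Delta_j \vee (\neg\theta)\in\Delta_j$, so the criterion in Observation \ref{trivial1}(3) holds and $\limsup_I\Delta_i$ is complete.

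For $(1)\Rightarrow(2)$ I would fix $\theta\in\Sent(\sigma)$ and argue by contradiction, assuming neither $\theta$ nor $\neg\theta$ lies in $\liminf_I\Delta_i$. Unwinding the definition, $\theta\notin\liminf_I\Delta_i$ means $(\forall i)(\exists j\geq i)[\theta\notin\Delta_j]$; completeness of each $\Delta_j$ upgrades this to $(\forall i)(\exists j\geq i)[(\neg\theta)\in\Delta_j]$, i.e. $(\neg\theta)\in\limsup_I\Delta_i$, and symmetrically $\theta\in\limsup_I\Delta_i$. Then the theory $\limsup_I\Delta_i$ contains both $\theta$ and $\neg\theta$, hence is inconsistent, contradicting (1). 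So for every $\theta$ one of $\theta,\neg\theta$ lies in $\liminf_I\Delta_i$, which (together with consistency of $\liminf_I\Delta_i$ from the first part) is exactly (2).

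For $(2)\Rightarrow(3)$: the inclusion $\liminf_I\Delta_i\subseteq\limsup_I\Delta_i$ always holds, so it suffices to prove the reverse. Given $\theta\in\limsup_I\Delta_i$, completeness of $\liminf_I\Delta_i$ gives $\theta\in\liminf_I\Delta_i$ or $(\neg\theta)\in\liminf_I\Delta_i$; in the latter case there is $i$ with $(\neg\theta)\in\Delta_j$ for all $j\geq i$, and consistency of each such $\Delta_j$ forces $\theta\notin\Delta_j$ for all $j\geq i$, so $\theta\notin\limsup_I\Delta_i$, a contradiction. Hence $\theta\in\liminf_I\Delta_i$, so $\limsup_I\Delta_i=\liminf_I\Delta_i$ and the limit exists. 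Finally $(3)\Rightarrow(1)$ is immediate: by the first part $\liminf_I\Delta_i$ is consistent, and if $\lim_I\Delta_i$ exists then $\limsup_I\Delta_i=\liminf_I\Delta_i$ is consistent.

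The whole argument is bookkeeping with the $\limsup/\liminf$ definitions and Observation \ref{trivial1}; the only subtlety I anticipate is that consistency of $\limsup_I\Delta_i$ is genuinely stronger than consistency of $\liminf_I\Delta_i$ in general, so in $(2)\Rightarrow(3)$ one must argue sentence-by-sentence, invoking consistency of the individual $\Delta_j$, rather than trying to appeal to a blanket consistency statement for $\limsup_I\Delta_i$.
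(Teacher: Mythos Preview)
Your proof is correct and follows essentially the same approach as the paper's: the first sentence is read off from Observation~\ref{trivial1}, and the cycle $(1)\Rightarrow(2)\Rightarrow(3)\Rightarrow(1)$ is closed by the same sentence-by-sentence bookkeeping with completeness and consistency of the individual $\Delta_j$. The only cosmetic difference is that you phrase $(1)\Rightarrow(2)$ and $(2)\Rightarrow(3)$ as arguments by contradiction, whereas the paper argues the contrapositive directly; the logical content is identical.
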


\begin{proof}  Let $\theta \in \Sent(\sigma)$ be given.  We know $\theta \wedge \neg \theta$ is not in any $\Delta_i$ (since they are each consistent), and therefore $\theta \wedge \neg \theta \notin \liminf_I \Delta_i = \Th(\liminf_I \Delta_i)$ and so $\liminf_I \Delta_i$ is consistent.  By completeness of each $\Delta_i$ we know that at least one of $\theta$ or $\neg \theta$ is in $\limsup_I \Delta_i$ and so $\theta \in \limsup_I \Delta_i \subseteq \Th(\limsup_I \Delta_i)$ or $\neg \theta \in \limsup_I \Delta_i \subseteq \Th(\limsup_I \Delta_i)$ and $\limsup_I \Delta_i$ is complete.

We prove the claim in the third sentence.

(1) $\Rightarrow$ (2) If $\limsup_I \Delta_i$ is consistent then it is both complete and consistent, and for each $\theta \in \Sent(\sigma)$ we have exactly one of $\theta$ or $\neg \theta$ in $\Th(\limsup_I \Delta_i)$.  If without loss of generality $\theta \in \Th(\limsup_I \Delta_i)$ we get that $\neg \theta$ is eventually not in the $\Delta_i$, say for $j \geq i$ we have $(\neg \theta) \notin \Delta_j$.  By the completeness of the $\Delta_j$ we get that $\theta\in \Delta_j$ for all $j \geq i$.  This shows $\liminf_I\Delta_i$ is complete.

(2) $\Rightarrow$ (3)  Suppose $\liminf_I\Delta_i$ is complete.  If $\theta \notin \liminf_I\Delta_i$ then $(\neg \theta) \in  \liminf_I\Delta_i$, by completeness.  Then for some $i\in I$ we have $(\neg\theta) \in \Delta_j$  for $j\geq i$, and by consistency of the $\Delta_i$ we know that $\theta\notin \Delta_j$ for $j\geq i$.  Thus $\theta \notin \limsup_I \Delta_i$ and we have shown $\limsup_I \Delta_i \subseteq \liminf_I \Delta_i$, so that $\limsup_I\Delta_i = \liminf_I\Delta_i$ and $\lim_I \Delta_i$ exists.

(3) $\Rightarrow$ (1)  If $\lim_I\Delta_i$ exists then $\limsup_I\Delta_i = \liminf_I \Delta_i$ and so $\limsup_I\Delta_i$ is consistent since $\liminf_I \Delta_i$ is consistent.
\end{proof}

The following is an Arzela-Ascoli type result which will be used later in an example.

\begin{proposition}\label{augmentedsubsequence}  Suppose $\{\Delta_i\}_{i\in I}$ is a nonempty collection of theories indexed by a countable directed set $I$.  If $\sigma$ is a countable signature (i.e. $\card(\mathcal{F}),\card(\mathcal{R})\leq \aleph_0$) there exists an unbounded subset $I' \subseteq I$ which is linear under the restricted partial order such that $\lim_{I'}\Delta_i$ exists.  If in addition we have $\Delta_i = \Th(\So_i)$ and that $\So$ is the direct limit of the $\So_i$ with $\So$ countable then $I'$ can be chosen so that $\lim_{I'} \Th(\So_i^*)$ exists.
\end{proposition}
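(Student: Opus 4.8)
The plan is to run a diagonal extraction in the spirit of the Arzela-Ascoli argument, using that a countable signature has only countably many sentences, and that a countable structure has only countably many augmented sentences. Two elementary remarks about a directed set $I$ do most of the work. (A) If $C \subseteq I$ is cofinal and $a \in I$, then $\{j \in C : j \geq a\}$ is again cofinal: given $i \in I$, pick an upper bound $b$ of $i$ and $a$, then $c \in C$ with $c \geq b$. (B) If $C \subseteq I$ is cofinal and $C = A \cup B$, then $A$ or $B$ is cofinal: otherwise there is $a_0$ above which $A$ has no member and $b_0$ above which $B$ has no member, and taking an upper bound of $a_0, b_0$ and then a member of $C$ above it gives a contradiction.

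For the first assertion, enumerate $\Sent(\sigma) = \{\theta_n : n \in \omega\}$, which is possible since $\card(\mathcal{F}), \card(\mathcal{R}) \leq \aleph_0$. I would construct recursively a strictly increasing sequence $i_0 < i_1 < \cdots$ in $I$, a decreasing chain of cofinal sets $I = C_{-1} \supseteq C_0 \supseteq C_1 \supseteq \cdots$, and bits $\epsilon_n \in \{0,1\}$, as follows. Pick $i_0 \in I$ arbitrarily. Given $i_{n-1}$ and the cofinal set $C_{n-1}$, use (A) to choose $i_n \in C_{n-1}$ with $i_n > i_{n-1}$; then $\{j \in C_{n-1} : j \geq i_n\}$ is cofinal by (A), so splitting it according to whether $\theta_n \in \Delta_j$ and applying (B) yields a cofinal subset, which we call $C_n$, together with a bit $\epsilon_n$ such that $\theta_n \in \Delta_j$ holds exactly when $\epsilon_n = 1$, for every $j \in C_n$. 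Put $I' = \{i_n : n \in \omega\}$, a chain under the induced order. For fixed $m$ and every $n > m$ we have $i_n \in C_{n-1} \subseteq C_m$, hence $\theta_m \in \Delta_{i_n}$ iff $\epsilon_m = 1$; so the membership of $\theta_m$ in $\Delta_{i_n}$ is eventually constant along $I'$, whence $\theta_m \in \limsup_{I'} \Delta_i$ iff $\theta_m \in \liminf_{I'} \Delta_i$. As $m$ was arbitrary, $\limsup_{I'}\Delta_i = \liminf_{I'}\Delta_i$, so $\lim_{I'}\Delta_i$ exists.

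For the second assertion, $S$ is countable, hence so is $\Sent(\sigma,\So^*) = \{\psi_n : n \in \omega\}$. I would run the same recursion with $\psi_n$ in place of $\theta_n$ and $\Th(\So_j^*)$ in place of $\Delta_j$, but at stage $n$ also demand that every constant occurring in $\psi_0, \ldots, \psi_n$ lie in $S_{i_n}$. This is no obstruction: each such constant $c$ lies in some $S_{j(c)}$ because $S = \bigcup_{i \in I} S_i$, the finitely many $j(c)$ admit a common upper bound $u$ by directedness, and $\{j \in C_{n-1} : j \geq u\}$ is cofinal by (A), so $i_n$ can be chosen there (and above $i_{n-1}$, after enlarging $u$ if needed). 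Then for $j \in C_n$ we have $j \geq i_n$, so every constant of $\psi_n$ already lies in $S_j$, the sentence $\psi_n$ belongs to $\Sent(\sigma, \So_j^*)$, and the condition $\psi_n \in \Th(\So_j^*)$ is simply $\So_j^* \models \psi_n$; the eventual-constancy argument then goes through as before, giving that $\lim_{I'}\Th(\So_i^*)$ exists.

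The step I expect to be the main obstacle is ensuring that $I'$ is genuinely unbounded in $I$, and not merely a strictly increasing $\omega$-chain (the latter already gives $I'$ no greatest element). This calls for interleaving a further family of steps into the recursion — for instance, when $I$ has a cofinal sequence $d_0, d_1, \ldots$ (the typical case in the intended applications), requiring in addition that $i_n \geq d_n$ at stage $n$; such a demand is once more compatible with staying inside the cofinal set $C_{n-1}$ by (A). Coordinating this requirement with the deciding steps, and in the second part with the constant-absorbing steps, and checking that these several families of conditions can be met simultaneously without ever forcing the construction out of a cofinal set, is the only genuinely delicate bookkeeping in the argument.
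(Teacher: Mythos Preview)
Your argument is correct and is the same Arzel\`a--Ascoli diagonalisation the paper uses, with two cosmetic differences: you split each cofinal set two ways (``$\theta_n\in\Delta_j$'' versus not) rather than the paper's three-way split, and you carry cofinal subsets of $I$ throughout instead of first passing to a countable chain and then thinning inside it. These choices are equivalent for the purpose at hand; your two remarks (A) and (B) are exactly what replaces the paper's preliminary extraction of an $\omega$-chain.

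Your flagged ``main obstacle''---guaranteeing that the diagonal chain $I'$ is unbounded and not merely strictly increasing---is precisely the one substantive point, and the paper handles it in a way you may find anticlimactic: it simply writes ``Letting $I=\{j_0,\ldots\}$ be an enumeration'', i.e.\ it tacitly assumes $I$ is countable, builds an unbounded $\omega$-chain $I_0$ from the enumeration, and then observes that any infinite subset of $I_0$ is automatically unbounded. Your proposed fix (interleave with a cofinal $\omega$-sequence $(d_n)$) works under the slightly weaker hypothesis that $I$ has countable cofinality, and is compatible with all the other bookkeeping exactly as you say, by (A). So there is no missing idea in your write-up; the ``delicate bookkeeping'' you anticipate is no worse than adding one more upper bound at each stage before choosing $i_n$.
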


\begin{proof}  We prove the first claim, and the proof of the claim regarding augmented theories follows the same lines.  If $I$ has a maximal element $j$ then letting $I' = \{j\}$ it is easy to see that the limit exists.  Else it is easy to inductively construct a set $I_1$ which is linearly ordered, order isomorphic to the natural numbers $\omega$, and unbounded in $I$.

Since $\sigma$ is countable we know $\Sent(\sigma)$ is countable.  Let $\{\theta_n\}_{n\in \mathbb{N}} = \Sent(\sigma)$.  Let $i_1 =\min(I_1)$.  Supposing we have already defined $I_1, \ldots, I_m$ and $i_1, \ldots, i_m$, we note that at least one of the three sets 

\begin{center}

$T_{m, 0} = \{i\in I_m: i>i_m \wedge\theta_m \in \Delta_i\}$

$T_{m, 1} = \{i\in I_m: i>i_m \wedge (\neg \theta_m) \in \Delta_i\}$

$T_{m, 2} = \{i\in I_m: i>i_m \wedge \theta_m\notin \Delta_i \wedge (\neg \theta)\notin \Delta_i\}$
\end{center}

\noindent is infinite, so let $I_{m+1} = T_{m, n}$ where $n$ is minimal with $T_{m, n}$ infinite.  Let $i_{m+1} = \min(I_{m+1})$.  It is straightforward to check that $I' = \{i_m\}_{m\in \mathbb{N}}$ satisfies the conclusion.
\end{proof}

Recall that a theory $\Delta$ is \textbf{finitely axiomatizable} provided there exists a finite $\Gamma \subseteq \Sent(\sigma)$ such that $\Th(\Gamma) = \Delta$.  We say $\Delta$ is finitely axiomatizable modulo $\Sigma \subseteq \Delta$ provided there exists a finite $\Gamma \subseteq \Sent(\sigma)$ such that $\Th(\Gamma \cup \Sigma) = \Delta$.  Thus $\Delta$ is finitely axiomatizable if and only if it is finitely axiomatizable modulo $\emptyset$.

\begin{proposition}\label{sneakingup}  If $\Delta$ is a complete theory and there exists a directed system of theories $\{\Delta_i\}_{i\in I}$ and $\Sigma \subseteq \Sent(\sigma)$ such that

\begin{enumerate}  \item $(\forall i\in I)[\Sigma \subseteq \Delta_i]$

\item $(\forall i\in I)[\Delta_i \neq \Delta]$

\item $\lim_I\Delta_i = \Delta$

\end{enumerate}

then $\Sigma\subseteq \Delta$ and $\Delta$ is not finitely axiomatizable modulo $\Sigma$.
\end{proposition}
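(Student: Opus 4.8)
The plan is to dispatch the inclusion $\Gamma\subseteq\Delta$ directly and then prove non-axiomatizability by contradiction. First I would note that $\Gamma\subseteq\Delta$ is immediate from (1) and (3): every $\theta\in\Gamma$ lies in every $\Delta_i$, so in particular $(\exists i)(\forall j\geq i)[\theta\in\Delta_j]$, i.e.\ $\theta\in\liminf_I\Delta_i=\Delta$.

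For the remaining claim, assume for contradiction that $\Delta$ is finitely axiomatizable modulo $\Gamma$, say $\Consequ(\Gamma\cup\Sigma)=\Delta$ with $\Sigma$ finite. Adjoining a logically valid sentence to $\Sigma$ if necessary, we may assume $\Sigma\neq\emptyset$ and set $\theta=\bigwedge\Sigma$, so that $\Consequ(\Gamma\cup\{\theta\})=\Delta$ and in particular $\theta\in\Delta$. By (3) we have $\Delta=\liminf_I\Delta_i$, so there is $i_0\in I$ with $\theta\in\Delta_j$ for every $j\geq i_0$. For such $j$, hypothesis (1) gives $\Gamma\cup\{\theta\}\subseteq\Delta_j$, and since $\Delta_j$ is a theory it is closed under consequence, whence $\Delta=\Consequ(\Gamma\cup\{\theta\})\subseteq\Delta_j$.

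The last step extracts a contradiction using that $\Delta$ is complete. If $\Delta\subseteq\Delta_j$, then for any sentence $\phi$ at least one of $\phi,\neg\phi$ is in $\Delta$, hence in $\Delta_j$; so if some $\phi\in\Delta_j\setminus\Delta$ existed we would have $\phi,\neg\phi\in\Delta_j$ and therefore $\Delta_j=\Sent(\sigma)$. Thus each $\Delta_j$ with $j\geq i_0$ equals $\Delta$ or $\Sent(\sigma)$; by (2) the former is impossible, so $\Delta_j=\Sent(\sigma)$ for all $j\geq i_0$. But then $\liminf_I\Delta_i=\Sent(\sigma)$, so $\Delta=\Sent(\sigma)$ by (3), and now $\Delta_j=\Sent(\sigma)=\Delta$ contradicts (2) a second time.

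The one point requiring care — and the only place where completeness of $\Delta$ together with hypothesis (2) is really exploited — is this final dichotomy: a priori the $\Delta_j$ for large $j$ need not coincide with $\Delta$, since they might instead become inconsistent, and ruling this out forces one to observe that an eventually inconsistent system would make $\Delta$ itself inconsistent. Everything else is routine manipulation of consequence closure and of the definitions of $\limsup_I$ and $\liminf_I$, with a trivial edge case when the witnessing $\Sigma$ is empty.
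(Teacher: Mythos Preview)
Your proof is correct and follows essentially the same line as the paper's: both establish $\Gamma\subseteq\Delta$ from the $\liminf$, assume a finite axiomatization to get $\Delta\subseteq\Delta_j$ for all large $j$, and then use completeness of $\Delta$ together with (2) to derive a contradiction via the dichotomy $\Delta_j=\Delta$ or $\Delta_j=\Sent(\sigma)$. The only cosmetic differences are that you conjoin $\Sigma$ into a single sentence $\theta$ and organize the endgame as a dichotomy on $\Delta_j$, whereas the paper splits upfront on whether $\Delta$ itself is $\Sent(\sigma)$ or consistent; neither change affects the substance.
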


\begin{proof}  Obviously $\Sigma \subseteq \liminf_I\Delta_i = \Delta$ by (1) and (3).  If $\Delta$ were finitely axiomatizable modulo $\Sigma$ we would get a finite $\Gamma\subseteq \Sent(\sigma)$ with $\Delta = \Th(\Gamma \cup \Sigma)$.  As $\lim_I\Delta_i = \Delta$ we then select $i\in I$ with $j \geq i$ implying $\Sigma\cup \Gamma \subseteq \Delta_j$ and therefore $\Delta \subseteq \Delta_j$.  By (2) we know that the inclusion $\Delta \subseteq \Delta_j$ is proper for all $j \geq i$, and since $\Delta$ is complete we see that $\Delta_i$ is inconsistent.  By (3) we have $\Delta$ inconsistent as well, but then $\Delta = \Delta_i$, contradicting (2).
\end{proof}

We similarly say that a theory $\Delta$ is $\lambda$ -\textbf{axiomatizable} (modulo $\Sigma \subseteq \Delta$) provided there exists $\Gamma \subseteq\Sent(\sigma)$ with $\card(\Gamma) \leq \lambda$ and $\Delta = \Th(\Sigma)$ (resp. $\Delta = \Th(\Gamma \cup \Sigma)$).

Proposition \ref{sneakingup} has a converse:

\begin{proposition} \label{sneakingupotherdirection}  Let $\lambda$ be an infinite cardinal.  If $\Delta$ is a theory which is $\lambda$-axiomatizable modulo $\Sigma \subseteq \Delta$ and not $\kappa$-axiomatizable modulo $\Sigma$ for any $\kappa < \lambda$ then there exist $\{\Delta_{\alpha}\}_{\alpha <\lambda}$ such that 

\begin{enumerate}  \item $(\forall\alpha<\lambda)[\Sigma \subseteq \Delta_{\alpha}]$

\item $(\forall \alpha<\lambda)[\Delta_{\alpha} \neq \Delta]$

\item $\lim_{\lambda}\Delta_{\alpha} = \Delta$

\end{enumerate}

Moreover if $\Delta$ is complete the $\Delta_{\alpha}$ may be chosen to be complete.

\end{proposition}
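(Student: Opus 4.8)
The plan is to enumerate a minimal-size axiomatization of $\Delta$ modulo $\Gamma$ and take partial consequence-sets as the approximating theories. Write $\Delta = \Consequ(\Sigma \cup \Gamma)$ with $\Sigma = \{\varphi_\alpha : \alpha < \lambda\}$ an injective enumeration (possible since $\card(\Sigma) \le \lambda$; if $\card(\Sigma)<\lambda$ then $\Delta$ would be $\kappa$-axiomatizable modulo $\Gamma$ for some $\kappa<\lambda$, contrary to hypothesis, so in fact $\card(\Sigma)=\lambda$, and when $\lambda$ is finite we are in the situation already handled by Proposition \ref{sneakingup} read backwards — I would dispatch that case separately or simply assume $\lambda$ infinite). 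For $\alpha < \lambda$ set $\Delta_\alpha = \Consequ(\Gamma \cup \{\varphi_\beta : \beta < \alpha\})$. Then (1) is immediate, and (3) will follow because any $\theta \in \Delta$ has a finite proof from $\Sigma \cup \Gamma$ using only finitely many $\varphi_\beta$, hence $\theta \in \Delta_\alpha$ for all $\alpha$ past the (finite) supremum of those indices; and conversely each $\Delta_\alpha \subseteq \Delta$ since $\{\varphi_\beta : \beta<\alpha\} \subseteq \Sigma$. This gives $\liminf_\lambda \Delta_\alpha = \limsup_\lambda \Delta_\alpha = \Delta$.

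The real content is (2): we must ensure $\Delta_\alpha \ne \Delta$ for every $\alpha < \lambda$. This is exactly where non-$\kappa$-axiomatizability for $\kappa < \lambda$ is used. If $\Delta_\alpha = \Delta$ for some $\alpha < \lambda$, then $\Delta = \Consequ(\Gamma \cup \{\varphi_\beta : \beta < \alpha\})$ exhibits $\Delta$ as $\card(\alpha)$-axiomatizable modulo $\Gamma$, and $\card(\alpha) < \lambda$ (here one uses that $\lambda$, being the least such cardinal, is a cardinal, so $\card(\alpha) < \lambda$ for $\alpha < \lambda$), contradicting the hypothesis. So no such $\alpha$ exists. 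I expect this to be the main (though short) obstacle: it is the one step that genuinely consumes a hypothesis, and it requires being slightly careful that cutting the enumeration at stage $\alpha$ really does produce a set of size $<\lambda$, which is why the enumeration should be taken to have order type exactly $\lambda$ as an ordinal.

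For the "moreover" clause, suppose $\Delta$ is complete. The sets $\Delta_\alpha$ above need not be complete, so I would replace each by a completion: since $\Delta_\alpha \subseteq \Delta$ and $\Delta$ is consistent (a complete theory that equals $\Sent(\sigma)$ would be $0$-axiomatizable modulo $\Gamma$, hence the non-axiomatizability hypothesis forces $\lambda$ to exceed every cardinal, absurd — so $\Delta$ is consistent), each $\Delta_\alpha$ is consistent and by Lindenbaum's lemma extends to a complete consistent theory. I would choose these completions coherently: pick a completion $\widehat{\Delta_\alpha} \supseteq \Delta_\alpha$ with the property that $\widehat{\Delta_\alpha} \cap \{\theta, \neg\theta\}$ agrees with $\Delta$ whenever $\Delta$ decides $\theta$ — concretely, take $\widehat{\Delta_\alpha}$ to be any completion of $\Delta_\alpha \cup \Delta$... but $\Delta_\alpha \cup \Delta$ may be larger than $\Delta_\alpha$; instead, more simply, note $\Delta_\alpha \subseteq \Delta$ and let $\widehat{\Delta_\alpha}$ be a completion of $\Delta_\alpha$ chosen inside some fixed completion — actually the cleanest route is: since $\Delta$ is complete, $\Delta$ itself is the unique complete theory extending any consistent $\Delta' \subseteq \Delta$ that already decides everything $\Delta$ decides, which is not automatic. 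The safe choice: let $\widehat{\Delta_\alpha}$ be an arbitrary complete consistent extension of $\Delta_\alpha$; then $\widehat{\Delta_\alpha} \supseteq \Gamma$, giving (1); we have $\widehat{\Delta_\alpha} \ne \Delta$ for all $\alpha$ — if $\widehat{\Delta_\alpha} = \Delta$ then since both are complete and $\Delta_\alpha \subseteq \widehat{\Delta_\alpha}=\Delta$, this does not immediately contradict anything, so here I would instead argue via (3): once I show $\lim_\lambda \widehat{\Delta_\alpha} = \Delta$, completeness of each $\widehat{\Delta_\alpha}$ together with Lemma \ref{trivial3} is available, but to force $\widehat{\Delta_\alpha}\neq\Delta$ I pick the completion to contain $\neg\psi_\alpha$ for some sentence $\psi_\alpha \in \Delta \setminus \Delta_\alpha$ (such $\psi_\alpha$ exists precisely because $\Delta_\alpha \ne \Delta$, already established) — this is consistent since $\psi_\alpha \notin \Delta_\alpha$ means $\Delta_\alpha \cup \{\neg\psi_\alpha\}$ is consistent. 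Then $\widehat{\Delta_\alpha} \ne \Delta$ since $\psi_\alpha \in \Delta$. Finally $\lim_\lambda \widehat{\Delta_\alpha} = \Delta$: $\Delta = \liminf \Delta_\alpha \subseteq \liminf \widehat{\Delta_\alpha}$, and for $\limsup \widehat{\Delta_\alpha} \subseteq \Delta$ one checks that if $\theta \notin \Delta$ then $\neg\theta \in \Delta = \liminf\Delta_\alpha \subseteq \liminf \widehat{\Delta_\alpha}$, so by consistency $\theta \notin \widehat{\Delta_\alpha}$ eventually, hence $\theta \notin \limsup\widehat{\Delta_\alpha}$. This completes the argument.
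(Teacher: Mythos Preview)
Your proposal is correct and follows essentially the same route as the paper: enumerate a size-$\lambda$ axiomatization $\Sigma$, take initial-segment consequence sets, and for the complete case pick a witness $\psi_\alpha\in\Delta\setminus\Delta_\alpha$ and complete $\Delta_\alpha\cup\{\neg\psi_\alpha\}$. The paper's proof is terser (it simply asserts ``we easily get our conclusion'' for (1)--(3) and does not verify the limit in the complete case), whereas you spell out why $\Delta_\alpha\neq\Delta$ via the non-$\kappa$-axiomatizability hypothesis and why $\lim_\lambda\widehat{\Delta_\alpha}=\Delta$; but the underlying construction is identical.
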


\begin{proof}   Let $\Gamma \subseteq \Delta$ be of cardinality $\lambda$ such that $\Th(\Gamma \cup \Sigma) = \Delta$ and write $\Gamma = \{\theta_{\beta}\}_{\beta<\lambda}$.  Letting $\Delta_{\alpha}= \Th(\{\theta_{\beta}\}_{\beta \leq \alpha} \cup \Sigma)$ we easily get (1) - (3).  If $\Delta$ is complete then for each $\alpha<\lambda$ we select $\phi_{\alpha}\in \Delta \setminus \Th(\{\theta_{\beta}\}_{\beta\leq \alpha} \cup \Sigma)$.  Now let $\Delta_{\alpha}$ be a complete theory including $\{(\neg \phi_{\alpha})\} \cup \{\theta_{\beta}\}_{\beta\leq \alpha} \cup \Sigma$.
\end{proof}

\end{section}

\begin{section}{Direct Limits}\label{directlim}

We give an example of a structure which is a direct limit of substructures such that the limit of the theories of the substructures does not exist.
\begin{example}\label{nolimit}  Consider the relational structure $\So = (\mathbb{N}, \leq, E)$ where $\leq$ is the standard ordering on $\mathbb{N}$ and $Ex$ if and only if $x$ is even.  The finite substructures $\So_n$ where $S_n = \{1, 2, \ldots, n\}$ satisfy

\begin{center}
$(\exists x)[Ex\wedge (\forall y)[y\leq x]]$
\end{center}

\noindent if and only if $n$ is even.  Thus $\lim_{\mathbb{N}}\Th(\So_n)$ does not exist.

\end{example}

Now we list some rather trivial conditions under which the limits exist and conditions  under which the limits are equal to the theory of the direct limit.

\begin{proposition}\label{duh}  The following hold:

\begin{enumerate}

\item  If the elements of $\{\So_i\}_{i\in I}$ are eventually elementarily equivalent (i.e. $\exists i\in I$ such that $j\geq i$ implies $\Th(\So_i) = \Th(\So_j)$) then $\lim_I \Th(\So_i)$ exists.

\item If the elements of $\{\So_i\}_{i\in I}$ are eventually elementarily equivalent to $\So$ then $\lim_I \Th(\So_i) = \Th(\So)$.

\item If the elements of $\{\So_i\}_{i\in I}$ are eventually elementary substructures of $\So$ then $\lim_I \Th(\So_i^*) = \Th(\So^*)$.

\item If $\So$ is the direct limit of $\{\So_i\}_{i\in I}$ and the $\{\So_i\}_{i\in I}$ are eventually elementary substructures in each other ($\exists i\in I$ such that $i_1 \geq i_2 \geq i$ implies $\So_{i_2}$ is an elementary substructure of $\So_{i_1}$) then $\lim_I \Th(\So_i^*) = \Th(\So^*)$.
\end{enumerate}

\end{proposition}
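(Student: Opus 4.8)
The plan is to treat the four items in order: (1) and (2) are immediate from the definitions, (3) carries the (modest) real content, and (4) reduces to (3) via a classical union theorem. For (1), fix $i\in I$ with $\Th(\So_j)=\Th(\So_i)$ for all $j\geq i$, and check directly that $\limsup_I\Th(\So_i)=\liminf_I\Th(\So_i)=\Th(\So_i)$. If $\theta\in\Th(\So_i)$ then $\theta\in\Th(\So_j)$ for every $j\geq i$, so $\theta\in\liminf_I\Th(\So_i)$; conversely, if $\theta\in\limsup_I\Th(\So_i)$, choose (by directedness) some $i'\geq i$ and then $j\geq i'$ with $\theta\in\Th(\So_j)=\Th(\So_i)$, so $\theta\in\Th(\So_i)$. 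Combined with the always-valid inclusion $\liminf_I\Th(\So_i)\subseteq\limsup_I\Th(\So_i)$, this gives the claim. Item (2) is the same argument with $\Th(\So_i)$ replaced throughout by $\Th(\So)$.

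For (3), the observation to isolate is: if $\So_j$ is an elementary substructure of $\So$ and $\theta\in\Sent(\sigma,\So_j^*)$, then $\theta\in\Th(\So_j^*)$ if and only if $\theta\in\Th(\So^*)$, because every constant occurring in $\theta$ names an element of $S_j$ and $\So_j\preceq\So$, so $\So_j^*\models\theta\iff\So^*\models\theta$. Fix a threshold $i^*$ past which $\So_j\preceq\So$. Given any $\theta\in\Sent(\sigma,\So^*)$, we have $\theta\in\Sent(\sigma,\So_{i_0}^*)$ for some $i_0\in I$ (this is exactly the equality $\Sent(\sigma,\So^*)=\bigcup_{i\in I}\Sent(\sigma,\So_i^*)$ noted in the introduction); pick $i_1\geq i_0,i^*$. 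Then for every $j\geq i_1$ we have $\theta\in\Sent(\sigma,\So_j^*)$ and $\So_j\preceq\So$, hence $\theta\in\Th(\So_j^*)\iff\theta\in\Th(\So^*)$. Therefore $\theta\in\Th(\So^*)$ forces $\theta\in\liminf_I\Th(\So_i^*)$, while $\theta\notin\Th(\So^*)$ forces $\theta\notin\limsup_I\Th(\So_i^*)$; as $\theta$ ranges over all of $\Sent(\sigma,\So^*)$ this yields $\limsup_I\Th(\So_i^*)=\liminf_I\Th(\So_i^*)=\Th(\So^*)$.

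For (4), I would first show that its hypothesis implies the hypothesis of (3), namely that the $\So_i$ are eventually elementary substructures of $\So$. This is the directed-system form of the Tarski--Vaught elementary union theorem: if $i\in I$ is such that $\So_{i_2}\preceq\So_{i_1}$ whenever $i_1\geq i_2\geq i$, then $\So_j\preceq\So$ for every $j\geq i$. Since directedness gives $S=\bigcup_{i'\in I}S_{i'}=\bigcup_{i'\geq i}S_{i'}$, one runs the Tarski--Vaught test by induction on formula complexity: a witness for an existential formula with parameters from $S_j$ (with $j\geq i$) lies in $S_{i_1}$ for some $i_1\geq j$, and elementarity of $\So_j\preceq\So_{i_1}$ together with the inductive hypothesis furnishes a witness already in $S_j$. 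Now (3) applies and gives $\lim_I\Th(\So_i^*)=\Th(\So^*)$.

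The only ingredient here that is not pure bookkeeping is the elementary union theorem invoked in (4), and it is classical; in (3) the single point to watch is that a fixed sentence of $\Sent(\sigma,\So^*)$ must eventually lie in the augmented languages of the $\So_j$, which is precisely the directed-union description $\Sent(\sigma,\So^*)=\bigcup_{i\in I}\Sent(\sigma,\So_i^*)$ recorded in the introduction. Accordingly I expect the derivation in (4) that $\So_j\preceq\So$ to be the step most worth writing out in full.
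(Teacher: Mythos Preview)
The paper states Proposition~\ref{duh} without proof (the label ``duh'' is a hint), so there is no argument to compare against; your write-up is correct and supplies exactly the routine verifications the author deemed unnecessary to include. Your reduction of (4) to (3) via the directed form of the elementary-chain theorem is the natural route, and your handling of (3)---using $\Sent(\sigma,\So^*)=\bigcup_{i\in I}\Sent(\sigma,\So_i^*)$ to push any fixed augmented sentence into the language of sufficiently large $\So_j$---is precisely the point one has to check.
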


We give an example of a structure which is a direct limit of substructures such that the limit of the first order theories of the substructures exists but is not equal to the theory of the structure.

\begin{example}\label{ZinQ}  Let $\mathcal{S} = (\mathbb{Q}, +, -, 0)$ and substructure $\mathcal{S}_n$ be the cyclic subgroup $\langle \frac{1}{(p_1\cdots p_n)^n}\rangle$ where $\{p_1, \ldots\}$ is an enumeration of the prime numbers.  Each $\mathcal{S}_n$ is isomorphic to $\mathbb{Z}$, so $\Th(\mathcal{S}_n) = \Th(\mathbb{Z})$ and $\lim_{\mathbb{N}} \Th(\mathcal{S}_n) =  \Th(\mathbb{Z})$.  However, $\Th(\mathcal{S})$ has the sentence $(\forall x)(\exists y)[y+y = x]$, so $\lim_{\mathbb{N}} \Th(\mathcal{S}_n) \neq \Th(\mathcal{S})$.
\end{example}

\begin{example}\label{poset}  For each $(n, m)\in \mathbb{N} \times \mathbb{N}$ we let $L_{(n, m)}$ be a linear order of cardinality $m$ and for each $n\in \mathbb{N}$ we let $L_{(n, \infty)}$ be a linear order isomorphic to $\mathbb{N}$.  For each $k\in \mathbb{N}$ let $L_{(\infty, k)}$ be the natural linear order on $\{1, \ldots, k\}$ and let $L_{(\infty, \infty)}$ be the linear order on $\mathbb{N}$.  Thus we consider $L_{(\infty, 1)}\subseteq L_{(\infty, 2)}\subseteq \cdots \subseteq L_{(\infty, \infty)}$.

Let $\bigsqcup_{n\in \mathbb{N}, m\in \mathbb{N} \cup\{\infty\}}L_{(n, m)}$ be given the partial order which preserves the order of each $L_{(n, m)}$ and which has elements of $L_{(n_1, m_1)}$ incomparable with those of $L_{(n_2, m_2)}$ if $(n_1, m_1) \neq (n_2, m_2)$.  For each $k\in \mathbb{N} \cup \{\infty\}$ let $L_k = L_{(\infty, k)} \sqcup\bigsqcup_{n\in \mathbb{N}, m\in \mathbb{N} \cup\{\infty\}}L_{(n, m)}$ also be given the smallest partial order preserving the orders on $L_{(\infty, k)}$ and on $\bigsqcup_{n\in \mathbb{N}, m\in \mathbb{N} \cup\{\infty\}}L_{(n, m)}$.  We have that $L_1 \subseteq L_2\subseteq \cdots \subseteq L_{\infty}$.  Moreover, it is easy to see that $L_{k}$ is isomorphic to $L_{k'}$ for $k, k'\in \mathbb{N} \cup\{\infty\}$.  Thus we can write $\lim_{\mathbb{N}} \Th(L_k) = \Th(L_{\infty})$.  However, letting $c$ denote the minimal element in $L_{(\infty, k)}$ for $k\in \mathbb{N} \cup\{\infty\}$, we see that for every $k\neq \infty$ that the mixed sentence

\begin{center}

$(\exists x)[x\geq c \wedge \neg (\exists y)[y > x]]$

\end{center}

\noindent is in $\Th(L_k^*)$ and is not in $\Th(L_{\infty}^*)$.  Moreover, by Proposition \ref{augmentedsubsequence} we may pass to a subsequence $I'\subseteq \mathbb{N}$ if necessary and get that $\lim_{I'} \Th(L_k^*)$ exists.   Thus we have $\lim_{I'}\Th(L_k) = \Th(L_{\infty})$, that $\lim_{I'}\Th(L_k^*)$ exists and that $\lim_{I'}\Th(L_k^*)\neq \Th(L_{\infty}^*)$.
\end{example}

\begin{example}\label{denseorder}  Let $I = \{A \subseteq \mathbb{R}: \card(A) \leq \aleph_0\}$ with each element endowed with the order inherited from $\mathbb{R}$.  Certainly the structure $\So = (\mathbb{R}, \leq)$ is the direct limit of the substructures in the set $I$.  For any $A\in I$ with $A \supseteq \mathbb{Q}$ we know that $A$ is an elementary substructure of $\So$, so that $\lim_I \Th(A^*) = \Th(\mathbb{R}^*)$ (by the Tarski-Vaught test of elementary subsumption).
\end{example}

\begin{example}\label{freegroups}  Let $\So$ be the free group $F(\{a_i\}_{i=1}^{\infty})$ of countably infinite rank and let $\So_n$ be the subgroup $F(\{x_i\}_{i=1}^{n+1})$ for each $n \geq 1$.  By \cite{KM} or \cite{S1} we have that $\So_n$ is an elementary subgroup of $\So_{n+1}$ and so $\lim_{\mathbb{N}} \Th(\So_n^*) = \Th(\So^*)$.
\end{example}

The following allows you to treat some examples not covered by Proposition \ref{duh}:

\begin{proposition}\label{duh2}  Let $\{\So_i\}_{i\in I}$  have direct limit $\So$.  Suppose for all $\{a_0, \ldots, a_n\}\subseteq S$ there is $i\in I$ such that $a_0, \ldots, a_n\in S_i$  and $j\geq i$ implies there exists an isomorphism $f: \So \rightarrow \So_j$ fixing $\{a_0, \ldots, a_n\}$.  Then $\lim_I \Th(\So_i^*) = \Th(\So^*)$.
\end{proposition}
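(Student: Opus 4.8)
The plan is to show that, under the hypothesis of Proposition \ref{duh2}, the family $\{\Th(\So_i^*)\}_{i\in I}$ has both $\liminf_I\Th(\So_i^*) \supseteq \Th(\So^*)$ and $\limsup_I\Th(\So_i^*) \subseteq \Th(\So^*)$; together with the trivial inclusion $\limsup \supseteq \liminf$ and with $\Th(\So^*)$ being a theory, this gives $\lim_I\Th(\So_i^*) = \Th(\So^*)$. The key observation is that an isomorphism $f:\So \to \So_j$ fixing a tuple of constants $\{a_0,\dots,a_n\}$ pulls back to an elementary equivalence relative to those constants: for any $\theta \in \Sent(\sigma)$ all of whose constants lie in $\{a_0,\dots,a_n\}$, we have $\So \models \theta$ if and only if $\So_j \models \theta$, simply because $f$ is an isomorphism carrying the interpretation of each $a_l$ in $\So$ to its interpretation in $\So_j$. (Here one uses that $\So_j$, as a substructure with $\So_j = (S_j, F|S_j, R|S_j)$, interprets each constant $a_l \in S_j$ as itself, and likewise $\So$.)

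Concretely, I would argue as follows. First, for the inclusion $\Th(\So^*) \subseteq \liminf_I\Th(\So_i^*)$: let $\theta \in \Th(\So^*)$, and let $\{a_0,\dots,a_n\} \subseteq S$ be the (finite) set of constants appearing in $\theta$. By hypothesis there is $i\in I$ with $a_0,\dots,a_n \in S_i$ such that for every $j \geq i$ there is an isomorphism $f:\So \to \So_j$ fixing each $a_l$. Then $\theta \in \Sent(\sigma, \So_j^*)$ for all $j\geq i$, and by the boxed observation $\So_j \models \theta$ for all $j\geq i$, i.e. $\theta \in \Th(\So_j^*)$ for all $j\geq i$. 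Hence $\theta \in \liminf_I\Th(\So_i^*)$. Second, for the inclusion $\limsup_I\Th(\So_i^*) \subseteq \Th(\So^*)$: let $\theta \in \limsup_I\Th(\So_i^*)$, so $\theta \in \Sent(\sigma, \So^*)$ and its finite constant set $\{a_0,\dots,a_n\}$ lies in $S$. Pick $i\in I$ as furnished by the hypothesis for this constant set; since $\limsup$ membership means $\theta \in \Th(\So_j^*)$ for some $j\geq i$, fix such a $j$ and an isomorphism $f:\So\to\So_j$ fixing the $a_l$. Then the observation gives $\So \models \theta$, so $\theta \in \Th(\So^*)$.

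The one point requiring a little care — and the place I expect to spend the most attention — is verifying the boxed elementary-equivalence-relative-to-constants claim cleanly, i.e. confirming that an isomorphism of $\sigma$-structures $f:\So\to\So_j$ which is the identity on $\{a_0,\dots,a_n\}$ genuinely yields $\So \models \theta \iff \So_j \models \theta$ for every $\theta \in \Sent(\sigma,\{a_0,\dots,a_n\}^*)$. This is the standard fact that isomorphic structures satisfy the same sentences, applied to the expanded signature $\sigma \cup \{a_0,\dots,a_n\}$: one checks $f$ is still an isomorphism of the expanded structures because $f$ fixes each constant symbol's interpretation, and then invokes preservation of satisfaction under isomorphism. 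Everything else is bookkeeping with the limsup/liminf definitions, which is why the argument is short. I would close by remarking that Proposition \ref{duh2} indeed covers cases outside Proposition \ref{duh}, since the $\So_i$ need not be elementarily equivalent to one another at all — only isomorphic to $\So$ after fixing finitely many constants, which is a strictly weaker hypothesis than any of the conditions in Proposition \ref{duh}.
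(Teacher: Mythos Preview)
Your argument is correct and is precisely the unpacking the paper elides with the single word ``Trivial.'' One small correction to your closing commentary, though: the hypothesis of Proposition~\ref{duh2} actually \emph{forces} the $\So_j$ to be eventually isomorphic to $\So$ (apply it to any singleton $\{a_0\}$), hence eventually elementarily equivalent to one another, so conditions (1) and (2) of Proposition~\ref{duh} are consequences of this hypothesis rather than strictly stronger than it; the real gain over Proposition~\ref{duh} is that you obtain the \emph{augmented} conclusion $\lim_I\Th(\So_i^*)=\Th(\So^*)$ without needing the $\So_i$ to be elementary substructures of $\So$ or of each other, exactly as the example following the proposition illustrates.
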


\begin{proof}  Trivial.
\end{proof}

\begin{example}  Let $\So = (\mathbb{Z}, \leq)$ and $\So_m = (2\mathbb{Z} \cup (\mathbb{Z}\cap [-2m, 2m]), \leq)$.  Let $\{a_1, \ldots, a_n\} \subseteq \mathbb{Z}$ and pick $m$ such that $\{a_1, \ldots, a_n\} \subseteq [-2m, 2m]$.  Letting $k \geq m$ we let $f: \mathbb{Z} \rightarrow S_k$ be the unique isomorphism such that $f| [-2m, 2m]$ is identity.

Notice that none of the $\So_m$ are elementary substructures of $\So$ or of each other.  The sentence $(\exists x)[x\neq 2m\wedge x\neq 2m+2 \wedge 2m \leq x \wedge x\leq 2m+2]$ is true in $\So$ and in $\So_k$ for $k>m$ and false in $\So_k$ for $k\leq m$.  Nevertheless $\lim_I \Th(\So_i^*) = \Th(\So^*)$ by Proposition \ref{duh2}.
\end{example}

If the theory of $\So$ is very thoroughly understood then one might be able to verify that the theory of a directed set of substructures limits to the theory of $\So$, as happens in the following example which provides a contrast to Example \ref{ZinQ}.  Recall that a group $G$ is \textbf{divisible} if for all $g\in G$ and $n\in \mathbb{N}$ there exists $h\in G$ with $h^n = g$.

\begin{example}\label{sneakinguptoQ}  Let $\{p_1, \ldots\}$ be an enumeration of the primes and let $\So_n$ be the additive group $\mathbb{Z}[\frac{1}{(p_1\cdots p_n)^n}] \leq \mathbb{Q}$ under the abelian group signature $(+, -, 0)$ and $\So = (\mathbb{Q}, +, -, 0)$.  We'll use additive group theory notation instead of multiplicative.  The theory of nontrivial torsion-free divisible abelian groups is complete (such groups are $\kappa$-categorical for all uncountable $\kappa$, and no such group is finite so we have completeness of the theory by the Vaught completeness test \cite{V}).  Then $\Th(\mathbb{\So})$ is completely axiomatized by the abelian group axioms, plus the torsion-free axioms:

\begin{center}
$(\forall x)[x\neq 0 \Rightarrow mx \neq 0]$
\end{center}

\noindent for each $m\in \mathbb{N}$, plus the divisibility axioms:

\begin{center}

$(\forall x)(\exists y)[my = x]$

\end{center}

\noindent for each $m\in \mathbb{N}$, plus the nontriviality axiom $(\exists x)[x\neq 0]$.  The models $\So_n$ all satisfy the abelian group, torsion-free and nontriviality axioms.  Moreover the divisibility axiom $(\forall x)(\exists y)[my = x]$ is satisfied in $\So_n$ for all $n\geq k$, where $m$ divides $(p_1\cdots p_k)^k$.

Thus $\liminf_{\mathbb{N}}\Th(\So_n)\supseteq \Th(\So)$ and so $\liminf_{\mathbb{N}}\Th(\So_n) \supseteq \Th(\So)$ is complete and by Lemma \ref{trivial3} we get that $\lim_{\mathbb{N}}\Th(\So_n)$ exists and is consistent.  As $\Th(\So)$ is complete we obtain $\lim_{\mathbb{N}}\Th(\So_n)= \Th(\So)$.
\end{example}

We restate and prove Theorem \ref{manyautomorphisms}.

\begin{A}  Suppose $\{\So_i\}_{i\in I}$ is a collection of substructures of $\So$ such that $\lim_I \So_i = \So$.
\begin{enumerate}

\item  Suppose that for each $m \in \mathbb{N}$ and $i' \in I$ there exists $i \geq i'$ such that for all $j \geq i$ and $(m_1, m_2) \in \mathbb{N}^2$ such that $m_1 + m_2 \leq m$ and $c_1, \ldots, c_{m_0} \in S_i$ and $d_1, \ldots, d_{m_2} \in S_j$ there exists an automorphism $f: \So_j \rightarrow \So_j$ with $f(c_l) = c_l$ and $f(\{d_1, \ldots, d_{m_2}\}) \subseteq S_i$.  Then the limit $\lim_I \Th(\So_j^*)$ exists.

\item  If, in addition to (1), for each $m \in \mathbb{N}$ and $i' \in I$ there exists $i \geq i'$ such that whenever $(m_1, m_2) \in \mathbb{N}^2$ are such that $m_1 + m_2 \leq m$ and $c_1, \ldots, c_{m_0} \in S_i$ and $d_1, \ldots, d_{m_2} \in S$ there exists an automorphism $f: \So \rightarrow \So$ with $f(c_l) = c_l$ and $f(\{d_1, \ldots, d_{m_2}\}) \subseteq S_i$.  Then $\lim_I \Th(\So_j^*) = \Th(\So^*)$.

\end{enumerate}
\end{A}

\begin{proof}  Let $\phi(a_1, \ldots, a_p)$ be a sentence in $L_{\sigma, \mathcal{S}^*}$.  Without loss of generality let $\phi$ be in prenex form 
\begin{center}
$\phi(a_1, \ldots, a_p) \equiv \exists x_1 \cdots\exists x_{n_1}\forall x_{n_1 + 1} \cdots \forall x_{n_{2}} \cdots \cdots  \exists x_{n_k}\psi(a_1, \ldots, a_p, x_1, \ldots, x_{n_k})$

\end{center}

\noindent Let $m =  p + n_k$.  Select $i' \in I$ large enough that $a_1, \ldots a_p \in S_{i'}$.  Select $i$ as in the conclusion of (1) and let $j \geq i$ be given.  We show that if $\phi$ holds in $\So_j$ then $\phi$ also holds in $\So_i$.  If $\phi$ does not hold in $\So_j$ then the same proof shows that $\phi$ does not hold in $\So_i$ since $\neg \phi$ is logically equivalent to a sentence in prenex form which uses only the constants $a_1, \ldots, a_p$ and has $n_k$ quantifiers.  Thus the statement $\phi$ is either true in $\So_j$ for all $j \geq i$ or is false in $\So_j$ for all $j\geq i$.

Assuming $\phi$ holds in $\So_j$ we select constants $b_1, \ldots, b_{n_1}$ in $S_j$ so that

\begin{center}
$\forall x_{n_1+1} \cdots \forall x_{n_{2}} \cdots \cdots  \exists x_{n_k}\psi(a_1, \ldots, a_p, b_1, \ldots, b_{n_1}, x_{n_1+1}, \ldots x_{n_k})$
\end{center}

\noindent holds in $S_j$.  Select an automorphism $f_1:\So_j\rightarrow \So_j$ which fixes each of $a_1, \ldots, a_p$ and such that $f_1(\{b_1, \ldots, b_{n_1}\}) \subseteq S_i$.  Then the sentence 

\begin{center}
$\forall x_{n_1+1} \cdots \forall x_{n_{2}} \cdots \cdots  \exists x_{n_k}\psi(a_1, \ldots, a_p, f_1(b_1), \ldots, f_1(b_{n_1}), x_{n_1+1}, \ldots x_{n_k})$
\end{center}

\noindent holds in $S_j$ as well.  Let $b_{n_1+1}, \ldots, b_{n_2}\in S_i$ be given.  Since the sentence

\begin{center}
$\exists x_{n_2+1} \cdots \exists x_{n_{3}}\forall x_{n_3+1} \cdots \cdots  \exists x_{n_k}\psi(a_1, \ldots, a_p, f_1(b_1), \ldots, f_1(b_{n_1}), b_{n_1+1}, \ldots, b_{n_2}, \ldots,  x_{n_k})$
\end{center}

\noindent is true in $\So_j$ we may select $b_{n_2+1}, \ldots, b_{n_3}\in S_j$ so that 

\begin{center}
$\forall x_{n_3+1} \cdots \cdots  \exists x_{n_k}\psi(a_1, \ldots, a_p, f_1(b_1), \ldots, f_1(b_{n_1}), b_{n_1+1}, \ldots, b_{n_2}, b_{n_2+1}, \ldots, b_{n_3},$

$x_{n_3 + 1}, \ldots,  x_{n_k})$
\end{center}

\noindent holds in $S_j$.  Select an automorphism $f_3:\So_j \rightarrow \So_j$ which fixes all of $a_1, \ldots, a_p$, $f_1(b_1), \ldots, f_1(b_{n_1})$, $b_{n_1+1}, \ldots, b_{n_2}$ and such that $f_3(\{b_{n_2+1}, \ldots, b_{n_3}\}) \subseteq S_i$.  Now 

\begin{center}
$\forall x_{n_3+1} \cdots \cdots  \exists x_{n_k}\psi(a_1, \ldots, a_p, f_1(b_1), \ldots, f_1(b_{n_1}), b_{n_1+1}, \ldots, b_{n_2}, f_3(b_{n_2+1}), \ldots, f_3(b_{n_3}),$

$x_{n_3+1}, \ldots,  x_{n_k})$
\end{center}

\noindent holds in $\So_j$, so we let $b_{n_3+1}, \ldots, b_{n_4} \in S_i$ be given.  Continue selecting constants for existential claims that hold in $\So_j$, mapping them into $S_i$ via automorphisms which fix all previously determined constants known to be in $S_i$, and letting elements of $S_i$ be selected where we encounter universal quantifiers.  Once we have exhausted all the quantifiers of $\phi$ we have indeed checked that $\phi$ holds in $\So_i$.

To prove (2) we assume the hypotheses.  Since the hypotheses of (1) hold we know that $\lim_I \Th(\So_i^*)$ exists and so it is sufficient to show that for arbitrarily high $j\in I$ we have $\phi$ true in $\So$ implies $\phi$ true in $\So_j$.  For this we let $\phi(a_1, \ldots, a_p)$ be given, again assuming without loss of generality that $\phi$ is in prenex form with the number of quantifiers being $n_k$ and let $m = p + n_k$.  As $\lim_I \Th(\So_i^*)$ exists we let $i' \in I$ be such that for all $j \geq i'$ we have $\phi \in \Th(\So_{i'}^*)$ if and only if $\phi\in \Th(\So_j^*)$.  Pick $i$ as in the statement of (2).  By the same proof as that for (1) we determine that $\phi\in \Th(\So^*)$ if and only if $\phi \in \Th(\So_i^*)$.  Thus we conclude that $\lim_I \Th(\So_j^*) = \Th(\So^*)$.
\end{proof}

This theorem has applications in structures which have lots of automorphisms.  We give some examples.

\begin{example}\label{infinitesets}  Consider an infinite set under the trivial signature.  Thus $\So$ is simply an infinite abstract set and the language $L_{\sigma, \So^*}$ is the language of predicate calculus with equality and with infinitely many constants.  Let $I$ index the partially ordered set of finite subsets of $S$.  For each $m\in \mathbb{N}$ and $i' \in I$ we pick $i \geq i'$ such that $|S_i| \geq m$.  For any $j \geq i$ and $c_1, \ldots, c_{m_1} \in S_i$ and $d_1, \ldots, d_{m_2} \in S_j$ there is a bijection $f: S_j \rightarrow S_j$ which fixes $c_1, \ldots, c_{m_1}$ and has $f(\{d_1, \ldots, d_{m_2}\}) \subseteq S_i$.  Thus condition (1) of Theorem \ref{manyautomorphisms} holds, and condition (2) is similarly verified.  Thus $\lim_I \Th(\So_i^*) = \Th(\So^*)$ and we conclude that the augmented first order theory of an infinite set is the limit of the augmented first order theories of its finite subsets.
\end{example}

\begin{example}\label{infinitedimensionalvectorspaces}  Let $F$ be a field and let $\So$ be an infinite dimensional vector space over $F$.  The signature on $\So$ is that of $F$-vector spaces, with an abelian group operation $+$, additive inverse $-$, identity $0$, and for each element of $F$ a unary function giving scalar multiplication.  Let $I$ index the collection of finite dimensional subspaces of $\So$.  Given any $m\in \mathbb{N}$ and $i' \in I$ we select $i \geq i'$ with $\So_i$ of dimension $\geq m$.  If $\So_j \geq \So_i$ and $c_1, \ldots c_{m_1} \subseteq S_i$ and $d_1, \ldots, d_{m_2} \subseteq S_j$ we let $W_1$ be the span of $\{c_1, \ldots, c_{m_1}\}$ and $W_2 \leq \So_j$ be such that $\So_i = W_1 \oplus W_2$.  Pick a subspace $W_3 \leq S_j$ such that $W_1 \oplus W_3$ is the span of $\{c_1, \ldots, c_{m_1}, d_1, \ldots, d_{m_2}\}$.  As $W_3$ is of dimension $\leq m_2$ and $W_2$ is of dimension $\geq m_2$ there exists a nonsingular linear transformation $f:\So_j \rightarrow \So_j$ which fixes $W_1$ and satisfies $f(W_3) \leq W_2$.  Thus condition (1) of Theorem \ref{manyautomorphisms} holds and a similar check shows that condition (2) holds.
\end{example}

\begin{example}\label{finitefieldvectorspaces}  Let $\So$ be an infinite abelian group of exponent $p$ (i.e. $(\forall g\in S)[pg = 0]$).  Then $\So$ is a $\mathbb{Z}/p$ vector space and letting $I$ index the collection of finite subgroups we get that $\lim_I \Th(\So_i^*) = \Th(\So^*)$ as a reduct of the previous example.
\end{example}

\begin{example}\label{infiniterankfreeabeliangroups}  Let $\So$ be a free abelian group of infinite rank and let $I$ index the collection of all finite rank subgroups of $\So$ which are direct summands of $\So$ (i.e. there exists $C\leq \So$ with $\So = A \oplus C$.)  We shall make use of the following fact from abelian group theory (which is an immediate consequence of Theorem 1.1 of \cite[Section II.1]{H}):

\begin{fact*}\label{breakingoffsummand}  If $A$ is a free abelian group and $B \leq A$ is of finite rank then there exist $B\leq B' \leq A$ and $C\leq A$ with $B'$ of the same rank as $B$ and $A = B'\oplus C$.
\end{fact*}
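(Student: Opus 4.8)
The plan is to reduce to the case where $A$ has finite rank, settle that case with the stacked basis theorem for finitely generated abelian groups, and then reassemble the splitting.

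For the reduction, fix a basis $\{e_i\}_{i\in I}$ of $A$, so $A=\bigoplus_{i\in I}\mathbb{Z}e_i$, and let $n=\operatorname{rank}(B)$. Choose $b_1,\ldots,b_n\in B$ forming a maximal $\mathbb{Z}$-linearly independent subset of $B$. Each $b_\ell$ has finite support, so there is a finite $J\subseteq I$ with $b_1,\ldots,b_n\in A_J:=\bigoplus_{i\in J}\mathbb{Z}e_i$. I claim $B\leq A_J$: for $b\in B$ the set $\{b,b_1,\ldots,b_n\}$ is $\mathbb{Z}$-linearly dependent, and since the $b_\ell$ are independent this yields $mb=\sum_\ell c_\ell b_\ell\in A_J$ for some nonzero integer $m$; as $A_J$ is a direct summand of $A$ it is pure in $A$, so $mb\in A_J$ forces $b\in A_J$. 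Writing $A_{I\setminus J}:=\bigoplus_{i\in I\setminus J}\mathbb{Z}e_i$ we have $A=A_J\oplus A_{I\setminus J}$ and $B\leq A_J$.

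Now $A_J$ is free of finite rank $d$ and $B\leq A_J$, so by the stacked basis theorem there is a basis $f_1,\ldots,f_d$ of $A_J$ and positive integers $\alpha_1\mid\alpha_2\mid\cdots\mid\alpha_n$ with $\alpha_1 f_1,\ldots,\alpha_n f_n$ a basis of $B$. Put $B'=\langle f_1,\ldots,f_n\rangle$ and $C_0=\langle f_{n+1},\ldots,f_d\rangle$: then $B\leq B'\leq A_J$, the quotient $B'/B$ is finite so $\operatorname{rank}(B')=n=\operatorname{rank}(B)$, and $A_J=B'\oplus C_0$. Finally set $C=C_0\oplus A_{I\setminus J}$, so that $A=A_J\oplus A_{I\setminus J}=B'\oplus C$, as required. (One may replace the stacked basis theorem by purification: take $B'=\{a\in A_J:\ ma\in B\text{ for some }m\in\mathbb{Z}\setminus\{0\}\}$, note $A_J/B'$ is finitely generated and torsion-free, hence free, split the sequence $0\to B'\to A_J\to A_J/B'\to 0$, and observe that $B'/B$ is torsion so $\operatorname{rank}(B')=n$.)

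The one step needing care is the reduction: one must verify that all of $B$, not merely a maximal independent subset of it, lies in the finite-rank summand $A_J$, which is exactly where purity of direct summands enters. Passing to finite rank is genuinely necessary, since for $A$ of infinite rank the quotient of $A$ by the pure closure of $B$ need not be free, so the relevant short exact sequence need not split over $A$ directly.
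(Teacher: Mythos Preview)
The paper does not prove this Fact; it is simply quoted as a known result from abelian group theory and used as a black box in the free abelian group example. Your argument is correct and supplies the standard details: reduce to a finite-rank direct summand $A_J$ by taking the support of a maximal independent subset of $B$ (using that direct summands of torsion-free groups are pure to conclude all of $B$ lands in $A_J$), then apply the stacked basis theorem or purification inside $A_J$, and reassemble.

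One minor quibble with your closing remark: in this situation $A/B'$ is in fact \emph{always} free, since once $B'\le A_J$ one has $A/B'\cong (A_J/B')\oplus A_{I\setminus J}$. What is true is that \emph{proving} this freeness already requires essentially the reduction you performed, so one cannot bypass the reduction by purifying directly over all of $A$ and hoping the quotient is visibly free; your point about the logical structure is right, only the phrase ``need not be free'' overstates it.
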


Let $m\in \mathbb{N}$ and $i' \in I$ be given.  Select $i \geq i'$ with $\So_i$ of rank $\geq 3m$.  Let $m_1 + m_2 \leq m$, $j \geq i$, $c_1, \ldots, c_{m_1} \in S_i$ and $d_1, \ldots, d_{m_2} \in S_j$.  Select $A_1 \leq \So$ with $\So = \So_i \oplus A_1$, and let $\pi_{\So_i}:\So \rightarrow \So_i$ be projection according to the direct sum.  Now the kernel $\ker(\pi_{\So_i}|\So_j) = A$ of the restriction $\pi_{\So_i}|\So_j$ is a subgroup of $\So_j$ such that $\So_j = \So_i \oplus A$.  Let $\pi_A:\So_j \rightarrow A$ be the projection determined by $\pi_A(a) = a - \pi_{\So_i}(a)$.  Notice that the subgroup $\langle c_1, \ldots, c_{m_1}, \pi_{\So_i}(d_1), \ldots,  \pi_{\So_i}(d_{m_2})\rangle \leq \So_i$ is of rank at most $2m$ and so there exist subgroups $C, D \leq \So_i$ such that $C\geq \langle c_1, \ldots, c_{m_1}, \pi_{\So_i}(d_1), \ldots,  \pi_{\So_i}(d_{m_2})\rangle$ is of rank at most $2m$ and $C \oplus D = \So_i$.  Similarly we notice that $\langle \pi_A(d_1), \ldots, \pi_A(d_{m_2})\rangle \subseteq A$ is of rank at most $m$ and so there exist subgroups $E, F\leq A$ such that $E \geq \langle \pi_A(d_1), \ldots, \pi_A(d_{m_2})\rangle$, $E$ has rank at most $m$, and $A = E\oplus F$.

Thus $\So_j = C\oplus D\oplus E\oplus F$ with 

\begin{center}
$\So_i = C\oplus D$

$A = E\oplus F$

$C\geq \langle c_1, \ldots, c_{m_1}, \pi_{\So_i}(d_1), \ldots,  \pi_{\So_i}(d_{m_2})\rangle$

$E \geq \langle \pi_A(d_1), \ldots, \pi_A(d_{m_2})\rangle$
\end{center}

\noindent and $C$ of rank at most $2m$ and $E$ of rank at most $m$.  Since $\So_i$ has rank at least $3m$ we get that $D$ has rank at least $m$.  Now take $f:\So_j \rightarrow \So_j$ to be any isomorphism which fixes $C$ and takes $f(E) \leq D$.  This establishes condition (1) of Theorem \ref{manyautomorphisms} and condition (2) is established similarly.
\end{example}

From the above examples we obtain some intuitive aphorisms:

\vspace{.2in}
\noindent\textit{The first order theory of infinite sets is the limit of the first order theory of increasingly large finite sets.}

\vspace{.2in}
\noindent\textit{The (augmented) first order theory of an infinite abelian group of prime exponent $p$ is the limit of the first order theory of increasingly large finite abelian subgroups of exponent $p$.}

\vspace{.2in}

\noindent\textit{The (augmented) first order theory of a free abelian group of infinite rank is the limit of the first order theory of increasingly large finite rank direct summands.}

\vspace{.2in}

From Examples \ref{sneakinguptoQ}, \ref{infinitesets}, \ref{finitefieldvectorspaces} and \ref{breakingoffsummand} we get an easy proof of the following corollary.  Parts (1), (2) and (3) follow without any recourse to high power theorems (like model theory compactness).  Part (4) involves the discussion of Example \ref{sneakinguptoQ} which used a consequence of the L\"owenheim-Skolem Theorem and the selection of a Hamel basis for an uncountable vector space over $\mathbb{Q}$.

\begin{corollary}\label{notfinitelyax}  The following hold:

\begin{enumerate}
\item The property of being infinite is not finitely axiomatizable.

\item The property of being an infinite abelian group of exponent $p$ is not finitely axiomatizable.

\item The theory of an infinite rank free abelian group is not finitely axiomatizable modulo the theory of torsion-free abelian groups.

\item  The property of being a nontrivial torsion-free divisible abelian group is not finitely axiomatizable modulo the theory of nontrivial torsion-free abelian groups.

\end{enumerate}

\end{corollary}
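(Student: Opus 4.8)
The plan is to treat all four parts by a single template: for each part I single out a complete theory $\Delta=\Th(\So)$, a base theory $\Gamma\subseteq\Delta$, and a directed family $\{\So_i\}_{i\in I}$ of substructures of $\So$ with $\lim_I\So_i=\So$, each of which satisfies $\Gamma$ but none of which is elementarily equivalent to $\So$; I check that $\lim_I\Th(\So_i)=\Delta$; and I invoke Proposition \ref{sneakingup} to conclude that $\Delta$ is not finitely axiomatizable modulo $\Gamma$. One preliminary reduction is needed in order to feed the Examples into Proposition \ref{sneakingup}: for any structure $\tau$ one has $\Th(\tau)=\Th(\tau^*)\cap\Sent(\sigma)$, and both $\limsup$ and $\liminf$ commute with intersection by the fixed set $\Sent(\sigma)$, so whenever $\lim_I\Th(\So_i^*)=\Th(\So^*)$ we also obtain $\lim_I\Th(\So_i)=\Th(\So)$. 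In this way Examples \ref{infinitesets}, \ref{finitefieldvectorspaces} and \ref{infiniterankfreeabeliangroups} each yield the unaugmented equality $\lim_I\Th(\So_i)=\Th(\So)$ in their respective settings, while Example \ref{sneakinguptoQ} provides it directly.

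For (1) I would take $\So$ an infinite set, $\Gamma=\emptyset$, and the $\So_i$ the finite subsets of Example \ref{infinitesets}. Each $\So_i$ is finite, hence $\Th(\So_i)\neq\Th(\So)$, and $\lim_I\Th(\So_i)=\Th(\So)$ by the reduction above, so Proposition \ref{sneakingup} gives that $\Th(\So)$ is not finitely axiomatizable. Since the theory of infinite sets is complete (Vaught's test: it is $\aleph_0$-categorical with no finite models), it coincides with $\Th(\So)$, which gives (1). Part (2) is the same argument with $\So$ an infinite abelian group of exponent $p$, $\Gamma=\emptyset$, and the $\So_i$ the finite subgroups of Example \ref{finitefieldvectorspaces}; here the theory of infinite abelian groups of exponent $p$ is complete because such groups are precisely the infinite-dimensional $\mathbb{Z}/p$-vector spaces, which are categorical in every infinite power and admit no finite models, so again this theory equals $\Th(\So)$.

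For (3) I would let $\So$ be free abelian of infinite rank, let $\Gamma$ be the theory of free abelian groups, and let the $\So_i$ be the finite-rank direct summands of Example \ref{infiniterankfreeabeliangroups}. Each $\So_i$ is a finite-rank free abelian group, hence a model of $\Gamma$, and $\Gamma\subseteq\Th(\So)$ since $\So$ itself is free abelian; moreover $\Th(\So_i)\neq\Th(\So)$ (for instance $\So_i$ has only finitely many cosets modulo $2\So_i$, whereas $\So$ has infinitely many). Proposition \ref{sneakingup} then gives that $\Th(\So)$ is not finitely axiomatizable modulo $\Gamma$, and since all free abelian groups of infinite rank are elementarily equivalent, $\Th(\So)$ is exactly the theory of infinite-rank free abelian groups. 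For (4) I would take $\So=\mathbb{Q}$, let $\Gamma$ be the theory of nontrivial torsion-free abelian groups, and let the $\So_n\cong\mathbb{Z}$ be as in Example \ref{sneakinguptoQ}. Each $\So_n$ is nontrivial and torsion-free, hence models $\Gamma$; $\Gamma\subseteq\Th(\mathbb{Q})$; and $\Th(\So_n)=\Th(\mathbb{Z})\neq\Th(\mathbb{Q})$ since $\mathbb{Z}$ is not divisible. By Example \ref{sneakinguptoQ} we have $\lim_{\mathbb{N}}\Th(\So_n)=\Th(\mathbb{Q})$, which is complete, so Proposition \ref{sneakingup} gives that $\Th(\mathbb{Q})$ is not finitely axiomatizable modulo $\Gamma$; since within the class of nontrivial torsion-free abelian groups divisibility cuts out exactly the nontrivial torsion-free divisible abelian groups, all of which have theory $\Th(\mathbb{Q})$, this yields (4).

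I do not expect a genuine obstacle: the content is entirely in Proposition \ref{sneakingup} together with the Examples. The two points that need care are the reduction from augmented to unaugmented theories (so that Examples \ref{infinitesets}, \ref{finitefieldvectorspaces}, \ref{infiniterankfreeabeliangroups} can be applied) and the identification in each part of the stated property with the complete theory $\Th(\So)$ — which in every case rests on completeness of the theory of the relevant infinite structure, a fact already used in the corresponding Example.
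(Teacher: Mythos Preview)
Your approach is essentially the paper's: apply Proposition~\ref{sneakingup} with the directed families from Examples~\ref{infinitesets}, \ref{finitefieldvectorspaces}, \ref{infiniterankfreeabeliangroups}, \ref{sneakinguptoQ}, after verifying $\Th(\So_i)\neq\Th(\So)$ in each case. Your explicit reduction from the augmented equality $\lim_I\Th(\So_i^*)=\Th(\So^*)$ to the unaugmented one, and your discussion of why each stated ``property'' coincides with a complete theory $\Th(\So)$, make explicit what the paper leaves implicit; the paper in turn writes down concrete distinguishing sentences rather than arguing abstractly.

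One slip to correct in part~(4): the substructures $\So_n=\mathbb{Z}\bigl[\tfrac{1}{(p_1\cdots p_n)^n}\bigr]$ of Example~\ref{sneakinguptoQ} are \emph{not} isomorphic to $\mathbb{Z}$ --- you have conflated them with the cyclic subgroups of Example~\ref{ZinQ}. This does not damage your argument, since what you actually use is that each $\So_n$ is nontrivial, torsion-free, and not divisible (it fails divisibility by $p_{n+1}$), hence $\Th(\So_n)\neq\Th(\mathbb{Q})$; just drop the erroneous identification $\So_n\cong\mathbb{Z}$.
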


\begin{proof}  For (1) and (2) apply Proposition \ref{sneakingup}, noticing that for any finite set or finite abelian group of exponent $p$ there is some sentence 

\begin{center}
$(\exists x_1)\cdots (\exists x_m)[\bigwedge_{1\leq k \neq  l \leq m}x_k \neq x_l]$
\end{center}

\noindent which is true in an infinite model but not in the finite model.  For (3) we notice that the sentence

\begin{center}  $(\forall x_1)\cdots (\forall x_m)[\bigvee_{\emptyset \neq T \subseteq \{1, \ldots, m\}}(\exists z)[2z = \sum_{i\in T}x_i]]$

\end{center}
 \noindent is true in all free abelian groups of rank $< m$ and false in those of higher rank, and apply Proposition \ref{sneakingup}.  For (4) we use the models $\So_n$ in Example \ref{sneakinguptoQ}.  Letting $m = (p_1\cdots p_n)^n$ the sentence $(\forall x)(\exists y)[mx = y]$ is true in $\So_q$ for $q \geq n$ and false when $q<n$.  Apply Proposition \ref{sneakingup}.
\end{proof}

We end with a question which does not seem be covered by any of the criteria in this note:

\begin{question}  Let $\So$ be the group $S_{\infty}$ of finite supported bijections on $\mathbb{N}$ and $I$ the collection of symmetric subgroups $S_F = \{g\in S_{\infty}: \supp(g) \subseteq F\}$ where $F \subseteq \mathbb{N}$ is finite.  Is it the case that $\lim_I \Th(\So_i^*) = \Th(\So^*)$?
\end{question}

\end{section}

\section*{Acknowledgement}

The author is indebted to an anonymous referee for suggested corrections of errors in an earlier version of this note.

\bibliographystyle{amsplain}

\end{document}